\documentclass[preprint,12pt]{elsarticle}

\usepackage[english]{babel}
\usepackage[english]{babel}
\usepackage{amsfonts}
\usepackage{amssymb, amsthm}
\usepackage{xcolor}
\usepackage{marginnote}
\usepackage{amsmath}
\usepackage{a4wide}
\setlength{\parskip}{0.25em}

\numberwithin{equation}{section}

\usepackage{accents}

\usepackage{amssymb}
\usepackage{amsthm}

\newtheorem{thm}{Theorem}
\newtheorem{lemma}{Lemma}
\newtheorem{proposition}{Proposition}

\theoremstyle{definition}


\journal{Nonlinear Analysis: Real World \& Applications}

\begin{document}

\begin{frontmatter}



\title{On nonlinear boundary value problem corresponding to\\  $N$-dimensional inverse spectral problem }

\author[rvt]{Y.Sh.~Ilyasov\corref{cor1}}
\ead{ilyasov02@gmail.com}

\author[rvt]{N. F.~Valeev}
\ead{valeevnf@mail.ru}

\cortext[cor1]{Corresponding author}

\address[rvt]{Institute of Mathematics of UFRC RAS, 112, Chernyshevsky str., 450008 Ufa, Russia}

\begin{abstract}

We establish a  relationship between an inverse optimization spectral problem
for N-dimensional Schr\"odinger equation $
-\Delta \psi+q\psi=\lambda \psi
$ and a solution of the nonlinear
boundary value problem $-\Delta u+q_0 u=\lambda u- u^{\gamma-1},~~u>0,~~
	u|_{\partial \Omega}=0$. Using this relationship, we find an
exact solution for the inverse optimization spectral problem, investigate its stability and
obtain new results on the existence and uniqueness of the solution for the nonlinear boundary
value problem.

\end{abstract}

\begin{keyword}
 Schr\"odinger operator\sep inverse spectral problem\sep nonlinear elliptic equations; 
35P30\sep 35R30 \sep 35J65 \sep 35J10
\sep  35J60

\end{keyword}

\end{frontmatter}


\section{Introduction}
This paper is concerned with the inverse spectral problem for the operator of the form
\begin{equation} \label{eq:S}
\mathcal{L}_q\phi:=-\Delta \phi+q\phi,~~~ x\in \Omega,
\end{equation}
subject to the Dirichlet boundary condition
\begin{equation} \label{eq:Sq}
	\phi \bigr{|}_{\partial \Omega}=0.
\end{equation}
Here $\Omega$ is a bounded domain in $\mathbb{R}^N$, $N\geq 1$, the boundary $\partial \Omega$ is of class $C^{1,1}$.
We assume that  $q \in L^p(\Omega)$, where 
\begin{equation}\label{Pas}
p \in 	\begin{cases}
	[2,+\infty)~~~\mbox{if}~~N< 4,\\
	(2,+\infty)~~~\mbox{if}~~N =4,\\
		[N/2,+\infty)~~~\mbox{if}~~N>4.
	\end{cases}
\end{equation}
Under these conditions,  $\mathcal{L}_q$  with domain  $D(\mathcal{L}_q):=W^{2,2}(\Omega)\cap W^{1,2}_0(\Omega)$ defines a self-adjoint operator (see, e.g., \cite{edmund, Reed2}) so that its spectrum consists of an infinite sequence of eigenvalues $\{\lambda_i(q) \}_{i=1}^{\infty}$,  repeated according to their finite multiplicity and ordered as  $\lambda_1(q)<\lambda_2(q)\leq \ldots $.
Furthermore, the principal eigenvalue $\lambda_1(q)$ is a simple and isolated. 

The recover of the  potential
 $q(x)$ from a knowledge of the spectral data  $\{\lambda_i(q) \}_{i=1}^{\infty}$
is a classical problem and, beginning with the celebrated  papers by Ambartsumyan \cite{ambar} in 1929, Borg in 1946 \cite{borg}, Gel'fand \& Levitan \cite{gelL} in 1951, it received a lot of attention; see, e.g., surveys \cite{chadan, SavShk}. It is well known that a knowledge of the single spectrum $\{\lambda_i(q) \}_{i=1}^{\infty}$ is insufficient to determine the potential $q(x)$; see, e.g., \cite{borg, gelL}.

%


In this work  we deal with an inverse problem where  given finite set of eigenvalues:
$\{\lambda_i \}_{i=1}^{m}$, $m<+\infty$.
Having only finite spectral data, the inverse problem possesses infinitely many solutions. Thus additional conditions have to be imposed in order to make the problem well-posed. To overcome this difficulty, we  assume that an approximation $q_0$ of the potential $q$ is known. Under this assumption, it is natural to consider the following inverse optimization spectral problem: for a given $q_0$ and $\{\lambda_i \}_{i=1}^{m}$, $m<+\infty$, find a potential $\hat{q}$ closest to $q_0$ in a prescribed norm, such that $\lambda_i=\lambda_i(\hat{q})$ for all $i=1, \ldots, m$.

 In the present paper, we  study the following simplest variant of this problem:

\medskip

\par\noindent
$(P):$\,\,\textit{For a given $\lambda \in \mathbb{R}$ and $q_0 \in L^p(\Omega)$, find   a potential  $\hat{q} \in L^p(\Omega)$ such that  $\lambda=\lambda_1(\hat{q})$ and }
\begin{equation}\label{Var}
	\|q_0-\hat{q} \|_{L^p}=\inf\{||q_0-q||_{L^p}:~~ \lambda=\lambda_1(q), ~~q \in L^p(\Omega)\}.
\end{equation}

\medskip


It turns out that  this problem is related to the  following logistic nonlinear boundary value problem:
\begin{equation} \label{eq:Nonl}
\begin{cases}
-\Delta u+q_0 u=\lambda u- u^\frac{p+1}{p-1},~~~ x\in \Omega,	\\
~~u> 0,~~ x\in \Omega,	\\
~~u\bigr{|}_{\partial \Omega}=0.
\end{cases}
\end{equation}

Our first main result is as follows.

\begin{thm}\label{thm1}
Assume $\Omega$ is a bounded connected domain in $\mathbb{R}^N$ with a $C^{1,1}$-boundary $\partial \Omega$. Let $q_0 \in L^p(\Omega)$ be a given potential, where $p$ satisfies \eqref{Pas}. Then, for any $\lambda>\lambda_1(q_0)$,

$(1^o)$ there exists a unique potential   $\hat{q} \in L^p(\Omega)$ such that $\lambda=\lambda_1(\hat{q})$ and \eqref{Var} is satisfied;

$(2^o)$  there exists a weak positive solution $\hat{u} \in W^{1,2}_0(\Omega)$ of \eqref{eq:Nonl} such that 
$$
\hat{q}=q_0+\hat{u}^{2/(p-1)}~~\mbox{a.e. in}~~ \Omega.
$$
Furthermore, $\hat{u} \in C^{1, \beta}(\overline{\Omega})$ for some $\beta \in (0,1)$ and $\phi_1(\hat{q})=\hat{u}/\|\hat{u}\|_{L^2}$.
\end{thm}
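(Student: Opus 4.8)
The plan is to reduce the optimization problem $(P)$ to the logistic problem \eqref{eq:Nonl} and to prove optimality by a single test-function inequality. Throughout write $v=q-q_0$, set $s:=2p/(p-1)$ (so that $s-1=(p+1)/(p-1)$, $2/(p-1)=s/p$ and $2p'=s$ with $p'=p/(p-1)$), and recall the Rayleigh characterization $\lambda_1(q)=\min_{\|\phi\|_{L^2}=1}\mathcal{R}_q(\phi)$, where $\mathcal{R}_q(\phi)=\int_\Omega(|\nabla\phi|^2+q\phi^2)\,dx$ and the minimizer is the positive normalized principal eigenfunction $\phi_1(q)$.

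First I would solve \eqref{eq:Nonl} variationally by minimizing
$$J(u)=\tfrac12\int_\Omega(|\nabla u|^2+q_0u^2-\lambda u^2)\,dx+\tfrac1s\int_\Omega|u|^s\,dx,\qquad u\in W^{1,2}_0(\Omega).$$
Since $q_0\in L^p$, Hölder gives $\bigl|\int_\Omega q_0u^2\,dx\bigr|\le\|q_0\|_{L^p}\|u\|_{L^s}^2$, so $J(u)\ge\tfrac12\|\nabla u\|_{L^2}^2-C\|u\|_{L^s}^2+\tfrac1s\|u\|_{L^s}^s$, which is coercive on $W^{1,2}_0(\Omega)$ because $s>2$. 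The gradient and $L^s$ terms are weakly lower semicontinuous, and splitting $q_0=q_0^{(1)}+q_0^{(2)}$ with $q_0^{(1)}\in L^\infty$ and $\|q_0^{(2)}\|_{L^p}$ small (together with the compact embedding $W^{1,2}_0\hookrightarrow\hookrightarrow L^2$) makes the quadratic terms weakly continuous; hence $J$ attains its minimum at some $\hat u$. As $\lambda>\lambda_1(q_0)$, testing with $t\phi_1(q_0)$ gives $\inf J<0=J(0)$, so $\hat u\not\equiv0$; replacing $\hat u$ by $|\hat u|$ and using the strong maximum principle gives $\hat u>0$ in $\Omega$. A Moser/Brezis–Kato iteration (this is where $p\ge N/2$ enters) yields $\hat u\in L^\infty$, so $\hat q:=q_0+\hat u^{2/(p-1)}\in L^p$, and elliptic $L^p$-regularity with the $C^{1,1}$ boundary gives $\hat u\in C^{1,\beta}(\overline\Omega)$. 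Rewriting \eqref{eq:Nonl} as $-\Delta\hat u+\hat q\,\hat u=\lambda\hat u$ shows $\lambda$ is an eigenvalue of $\mathcal{L}_{\hat q}$ with a positive eigenfunction; since only the principal eigenfunction is sign-definite, $\lambda=\lambda_1(\hat q)$ and $\phi_1(\hat q)=\hat u/\|\hat u\|_{L^2}=:\hat\phi$. In particular $\hat q$ is feasible for $(P)$.

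The crux is the matching lower bound. Set $\hat v=\hat u^{2/(p-1)}=c\,\hat\phi^{2/(p-1)}$ with $c=\|\hat u\|_{L^2}^{2/(p-1)}>0$; then $\hat\phi^2=c^{1-p}\hat v^{p-1}$, which yields $\int_\Omega\hat v\hat\phi^2\,dx=c^{1-p}\|\hat v\|_{L^p}^p$ and $\|\hat\phi^2\|_{L^{p'}}=c^{1-p}\|\hat v\|_{L^p}^{p-1}$. For any feasible $q=q_0+v$, using $\hat\phi$ as a unit test function in the Rayleigh quotient of $q_0+v$, and using that $\hat\phi$ is itself the eigenfunction of $q_0+\hat v$ with eigenvalue $\lambda$, gives
$$\lambda\le\mathcal{R}_{q_0}(\hat\phi)+\int_\Omega v\,\hat\phi^2\,dx,\qquad \lambda=\mathcal{R}_{q_0}(\hat\phi)+\int_\Omega\hat v\,\hat\phi^2\,dx,$$
hence $\int_\Omega v\hat\phi^2\,dx\ge\int_\Omega\hat v\hat\phi^2\,dx$. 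Combining with Hölder, $\int_\Omega v\hat\phi^2\,dx\le\|v\|_{L^p}\|\hat\phi^2\|_{L^{p'}}$, and the two identities give $c^{1-p}\|\hat v\|_{L^p}^p\le\|v\|_{L^p}\,c^{1-p}\|\hat v\|_{L^p}^{p-1}$, i.e. $\|\hat v\|_{L^p}\le\|v\|_{L^p}$. Thus $\hat q$ attains the infimum in \eqref{Var}, which proves existence in $(1^o)$ and the representation in $(2^o)$.

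For uniqueness, suppose $q_0+v$ is another minimizer, so $\|v\|_{L^p}=\|\hat v\|_{L^p}$ and every inequality above is an equality. Equality in Hölder forces $v\ge0$ and $v^p=\kappa\,\hat\phi^{s}$ a.e., i.e. $v=\kappa^{1/p}\hat\phi^{2/(p-1)}$ has the same form as $\hat v$; matching $L^p$-norms gives $v=\hat v$, so $\hat q$ is unique. I expect the two genuinely technical points to be (i) the endpoint $p=N/2$ for $N\ge5$, where $s$ is the critical Sobolev exponent and one must argue that the defocusing term $\tfrac1s\|u\|_{L^s}^s$ still prevents loss of compactness in the minimization; and (ii) the regularity bootstrap delivering $\hat u\in L^\infty\cap C^{1,\beta}$ from $q_0\in L^p$, which is what guarantees $\hat q\in L^p$ and legitimizes the spectral reduction.
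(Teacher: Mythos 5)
Your proposal is correct, but it reverses the paper's logic and constitutes a genuinely different proof. The paper works first in potential space: it minimizes $Q(q)=\|q_0-q\|_{L^p}^p$ over the set $M_\lambda=\{q\in L^p:\lambda\le\lambda_1(q)\}$, which is convex by the strict concavity of $q\mapsto\lambda_1(q)$ (Lemma \ref{lem2}); coercivity plus convexity give existence and uniqueness of $\hat q$ at once, and only afterwards is the PDE \eqref{eq:Nonl} extracted via the Lagrange multiplier rule, whose justification is the $C^1$ Fr\'echet differentiability of $\lambda_1$ on $L^p$ (Lemma \ref{lem1}, the technical heart, proved via Kato's analytic perturbation theory and a compactness argument for $q\mapsto\phi_1(q)$). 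You instead solve \eqref{eq:Nonl} first by the direct method, identify $\lambda=\lambda_1(\hat q)$ from positivity of $\hat u$, and then certify \emph{global} optimality of $\hat q=q_0+\hat u^{2/(p-1)}$ by hand: since $\hat v^{p-1}$ is proportional to $\hat\phi^2$, the function $\hat\phi^2$ serves as an exact dual certificate, and your Rayleigh-quotient/H\"older chain $c^{1-p}\|\hat v\|_{L^p}^p=\int\hat v\hat\phi^2\,dx\le\int v\hat\phi^2\,dx\le\|v\|_{L^p}\,c^{1-p}\|\hat v\|_{L^p}^{p-1}$ is correct, as is uniqueness via the equality case in H\"older (which forces $v\ge0$ a.e.\ and $v=\mathrm{const}\cdot\hat\phi^{2/(p-1)}$, hence $v=\hat v$). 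This buys you a proof free of Lemmas \ref{lem1} and \ref{lem2}, of the Lusternik/Lagrange machinery, and of analytic families; what it gives up is the convex-analytic framework the paper reuses for Theorem \ref{thm2} --- your uniqueness concerns minimizers of \eqref{Var} only, so uniqueness for \eqref{eq:Nonl} itself would need a separate argument, whereas the paper derives it from local minimality of $Q$ on $M_\lambda$. Two minor points: at the endpoint $p=N/2$, $N>4$, your compactness worry dissolves, since the critical term $\tfrac1s\|u\|_{L^s}^s$ enters with a favorable sign and is weakly lower semicontinuous, while your splitting of $q_0$ already makes $\int_\Omega q_0u^2\,dx$ weakly continuous, which is all the direct method requires; and your regularity step, much like the paper's own appeal to Struwe's Lemma B.3, really yields only $\hat u\in W^{2,p}(\Omega)$ (from $\hat u\in L^\infty$ and $q_0\in L^p$), which embeds into $C^{1,\beta}(\overline{\Omega})$ only when $p>N$ --- so the $C^{1,\beta}$ claim is glossed at the same level in both arguments and is not a defect of yours relative to the paper's.
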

Using the relationship between $(P)$ and \eqref{eq:Nonl} stated in Theorem \ref{thm1}, we are able to prove the following theorem on the uniqueness of the solution for \eqref{eq:Nonl}.
\begin{thm}\label{thm2}
Assume that
\begin{equation}\label{PasG}
\begin{cases}
	2<\gamma \leq 4~~~\mbox{if}~~N< 4,\\
	2<\gamma < 4~~~\mbox{if}~~N =4,\\
		2<\gamma\leq \frac{2N}{N-2}~~~\mbox{if}~~N>4.
	\end{cases}
\end{equation}
Then, for any $q_0 \in L^p(\Omega)$ with $p\geq \frac{\gamma}{\gamma-2}$ and any $\lambda>\lambda_1(q_0)$, the boundary value problem
\begin{equation} 
\tag{\ref{eq:Nonl}$'$}
\begin{cases}
-\Delta u+q_0 u=\lambda u- u^{\gamma-1},~~~ x\in \Omega,	\\
~~u\geq 0,~u \not\equiv 0,~~ x\in \Omega,	\\
~~u\bigr{|}_{\partial \Omega}=0,
\end{cases}
\end{equation}
 has at most one   weak solution.
\end{thm}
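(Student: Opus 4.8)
The plan is to recast the boundary value problem as a principal-eigenvalue problem and then exploit the strict monotonicity of $s\mapsto s^{\gamma-2}$ through the Rayleigh quotient. This is a spectral incarnation of the Brezis--Oswald / D\'iaz--Sa\'a sublinearity argument, and it is precisely the mechanism behind the correspondence with $(P)$ recorded in Theorem \ref{thm1}.

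First I would argue that any admissible solution is genuinely a principal eigenfunction. Suppose $u_1,u_2\in W^{1,2}_0(\Omega)$ are two weak solutions of \eqref{eq:Nonl}$'$ with $u_j\ge 0$, $u_j\not\equiv 0$. Rewriting the equation as $-\Delta u_j+(q_0+u_j^{\gamma-2})u_j=\lambda u_j$, I set $q_j:=q_0+u_j^{\gamma-2}$, so that $u_j$ is a nonnegative weak eigenfunction of $\mathcal{L}_{q_j}$ for the eigenvalue $\lambda$. The subcriticality in \eqref{PasG} (note $\gamma\le 2N/(N-2)$ for $N\ge 3$) together with $p\ge \gamma/(\gamma-2)$ lets me run a Brezis--Kato/Moser bootstrap and obtain $u_j\in L^\infty(\Omega)\cap C^{1,\beta}(\overline{\Omega})$, exactly as in Theorem \ref{thm1}; in particular $u_j^{\gamma-2}\in L^\infty(\Omega)$, so $q_j\in L^p(\Omega)$ lies in the admissible class \eqref{Pas} and the spectral framework recalled above applies to $\mathcal{L}_{q_j}$. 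I would then invoke the strong maximum principle/Harnack inequality to upgrade $u_j\ge 0,\ u_j\not\equiv 0$ to $u_j>0$ in $\Omega$. Since a nonnegative eigenfunction can only be the sign-definite, simple principal one, this forces $\lambda=\lambda_1(q_j)$ and $\phi_1(q_j)=u_j/\|u_j\|_{L^2}$.

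Next I would compare the two potentials through the Courant--Fischer characterization $\lambda_1(q)=\min\{\int_\Omega(|\nabla\psi|^2+q\psi^2)\,dx:\ \|\psi\|_{L^2}=1\}$. Using $\phi_1(q_1)$ as a trial function for $\lambda_1(q_2)$ and recalling $\lambda_1(q_1)=\lambda_1(q_2)=\lambda$ gives
\[ 0\le \int_\Omega (q_2-q_1)\,\phi_1(q_1)^2\,dx=\frac{1}{\|u_1\|_{L^2}^2}\int_\Omega \bigl(u_2^{\gamma-2}-u_1^{\gamma-2}\bigr)u_1^2\,dx, \]
and the symmetric choice yields $\int_\Omega (u_1^{\gamma-2}-u_2^{\gamma-2})u_2^2\,dx\ge 0$; all integrals converge because the $u_j$ are bounded. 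Adding the two inequalities I obtain
\[ \int_\Omega \bigl(u_2^{\gamma-2}-u_1^{\gamma-2}\bigr)\bigl(u_1^2-u_2^2\bigr)\,dx\ge 0. \]

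Finally, since $\gamma>2$ the map $s\mapsto s^{\gamma-2}$ is strictly increasing on $[0,\infty)$, so the integrand $(u_2^{\gamma-2}-u_1^{\gamma-2})(u_1^2-u_2^2)$ is pointwise $\le 0$ and vanishes only where $u_1=u_2$. Combined with the displayed inequality this forces the integrand to vanish a.e., whence $u_1=u_2$ a.e. in $\Omega$, which is the desired uniqueness. I expect the only genuine obstacle to lie in the first step: securing enough regularity for $L^p$-potentials that $q_j$ falls into the admissible class and that the strong maximum principle applies, so that each solution is legitimately the principal eigenfunction of $\mathcal{L}_{q_j}$. The exponent restrictions in \eqref{PasG} and the bound $p\ge\gamma/(\gamma-2)$ are tailored precisely to guarantee this; once the problem is placed in the eigenvalue setting, the remaining monotonicity argument is elementary.
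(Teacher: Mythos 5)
Your proof is correct, and it takes a genuinely different route from the paper's in the decisive step. The first half coincides with the first half of the paper's Lemma~\ref{lem:Nonl}: elliptic regularity and the Harnack inequality give $u_j>0$, and since only the principal eigenvalue admits a nonnegative eigenfunction (any other eigenfunction must be $L^2$-orthogonal to $\phi_1(q_j)>0$), each solution forces $\lambda=\lambda_1(q_j)$ with $q_j=q_0+u_j^{\gamma-2}$ and $u_j/\|u_j\|_{L^2}=\phi_1(q_j)$; your exponent bookkeeping is also right, as \eqref{PasG} together with $p\geq\gamma/(\gamma-2)$ places $q_j$ in the admissible class \eqref{Pas} in all three regimes. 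From there the paper stays inside its optimization framework: using the Fr\'echet differentiability of $\lambda_1$ (Lemma~\ref{lem1}), Lusternik's theorem for the tangent space of $\partial M_\lambda$, and a second-order computation, it shows that every such $\bar q$ is a local minimizer of the strictly convex functional $Q$ on the convex set $M_\lambda$, and then concludes from strict convexity that this local minimizer must coincide with the global minimizer $\hat q$ furnished by Theorem~\ref{thm1}. You instead run the classical Brezis--Oswald/D\'iaz--Sa\'a swap at the spectral level: test the Rayleigh quotient of $\lambda_1(q_2)$ with $\phi_1(q_1)$ and vice versa, add the resulting inequalities, and invoke strict monotonicity of $s\mapsto s^{\gamma-2}$ to force $u_1=u_2$ a.e. Your argument is shorter and more self-contained --- it needs neither Lemma~\ref{lem1}, nor the concavity Lemma~\ref{lem2}, nor the existence part of Theorem~\ref{thm1}, nor the somewhat delicate second-order local-minimality discussion of Lemma~\ref{lem:Nonl} --- whereas the paper's route has the conceptual payoff of deriving uniqueness directly from the convex structure of the inverse optimization problem, which is the point of the article. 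One further simplification available to you: the full $C^{1,\beta}$ bootstrap is not really needed, since $u_j\in W^{1,2}_0\subset L^{2^*}$ already yields $u_j^{\gamma-2}\in L^{N/2}$ when $N>4$ (and similarly in the remaining cases), so $q_j$ is admissible without $L^\infty$ bounds, and your final monotonicity step uses only $u_j\geq 0$, not strict positivity.
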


The existence of a solution for \eqref{eq:Nonl} follows in a standard way cf. \cite{brezis}. In the case  $q_0 \in L^\infty(\Omega)$, there are various proofs of the uniqueness of the solution for \eqref{eq:Nonl}; see, e.g., \cite{brezis, diaz} and  the references given there. 
However, as far as we know, the uniqueness in the case of an unbounded potential $q_0 \in L^p(\Omega)$ has not been proven before.


It should be emphasized that Theorem \ref{thm1}  also can be seen as a new method of proving 
the existence and uniqueness of a solution for nonlinear boundary value problems. Indeed, the finding of the 
minimizer $\hat{q}$ of constrained minimization problem \eqref{Var} also implies the existence of the solution $\hat{u}=(\hat{q}-q_0)^{(p-1)/2}$ for \eqref{eq:Nonl}, whereas the uniqueness of $\hat{u}$ follows from the uniqueness of the minimizer of  \eqref{Var}, as will be shown below.

This paper is organised as follows.
Section 2 contains some preliminaries. In Section 3, we give the proofs of Theorems \ref{thm1} and \ref{thm2}. In Section 4, using nonlinear problem \eqref{eq:Nonl}, we investigate stability properties of inverse optimization spectral problem $(P)$. Section 5 contains some remarks and open problems.

\section{Preliminaries}
In what follows, we denote by $\left\langle \cdot, \cdot \right\rangle $ and $\|\cdot\|_{L^2}$   the scalar product and the norm in  $L^2(\Omega)$, respectively; $W^{1,2}(\Omega), W^{2,2}(\Omega)$ are usual Sobolev spaces; $W^{1,2}_0:=W^{1,2}_0(\Omega)$ is the closure of $C^\infty_0(\Omega)$ in the norm 
$$
\|u\|_{1}=\left(\int_{\Omega} |\nabla u |^2 dx\right )^{1/2}.
$$
By a standard criterion (see,  e.g.,  \cite{edmund}, Theorem 1.4. p. 306), assumption \eqref{Pas} implies that $\mathcal{L}_q$ with domain $D(\mathcal{L}_q):=W^{2,2}(\Omega)\cap W^{1,2}_0(\Omega)$ is self-adjoint on $L^2(\Omega)$. Moreover, $\mathcal{L}_q$ is a semibounded operator  so that  the principal eigenvalue satisfies
\begin{equation}\label{lambda1}
	-\infty<\lambda_1(q)=\inf_{\phi \in W^{1,2}_0\setminus 0}\frac{\int_{\Omega} |\nabla \phi |^2 dx+\int_{\Omega} q\phi^2\,dx}{\int_{\Omega}\phi^2\,dx},
\end{equation}
where the minimum attained at  eigenfunction $\phi_1 \in W^{1,2}_0\setminus 0$. The  regularity of solutions for elliptic equations (see, e.g.,  Lemma B 3 in \cite{struw}) implies that $\phi_1 \in W^{2,q}(\Omega)$ for any $q\geq 2$ and therefore by the Sobolev  theorem, $\phi_1 \in C^{1,\alpha}(\overline{\Omega})$  for any $\alpha\in (0,1)$. Furthermore, in view of \eqref{Pas}, we may apply the weak Harnack inequality (see Theorem 5.2. in \cite{Trud}) and obtain, in a standard fashion (see, e.g., Theorem 8.38 in \cite{GilTrud} ),  that
the principal eigenvalue $\lambda_1(q)$ is simple and $\phi_1>0$ in $\Omega$.

By the Sobolev  theorem, we have continuous embeddings: $W^{2,2}_0(\Omega) \subset L^{\infty}(\Omega)$ if $N<4$, $W^{2,2}_0(\Omega) \subset L^{q}(\Omega)$, $\forall q \in [2,\infty)$ if $N=4$, $W^{2,2}_0(\Omega) \subset L^{2N/(N-4)}(\Omega)$ if $N>4$. Hence, by Holder's inequality and \eqref{Pas} 
\begin{equation}\label{22}
	\int_{\Omega} q^2\psi^2\,dx\leq a \|\Delta \psi \|_{L^2}^2+b\|\psi\|_{L^2}^2~~~~\forall \psi \in D(\mathcal{L}_q),
\end{equation}
for some constants $a,b \in (0,+\infty)$ which do not depend on $\psi \in D(\mathcal{L}_q)$. This implies  that for $q_0,q \in L^p$ and $\varepsilon \in \mathbb{R}$, the family $\mathcal{L}_{q_0+\varepsilon q}$ is analytic of type ($A$) (see \cite{Reed2}, p. 16) and therefore, by Theorem X.12 in \cite{Reed4},  $\mathcal{L}_{q_0+\varepsilon q}$ is an analytic family in the sense of Kato. 
Hence by  Theorem X.8 in \cite{Reed4}, $\lambda_1(q_0+\varepsilon q)$ is an analytic function of $\varepsilon$ near $0$ and $\phi_1(q_0+\varepsilon q)$ analytically depends on $\varepsilon$ near $0$ as a function of $\varepsilon$
with values in $L^2$.  
\begin{lemma}\label{lem1}
	$\lambda_1(q)$  is a continuously
differentiable map in $L^p$ with the Fr\'echet-derivative
\begin{equation}\label{eq:Val}
	D\lambda_1(q)(h)=\frac{1}{\|\phi_1(q)\|^2_{L^2}}\int_\Omega \phi_1^2(q) h\, dx, ~~\forall \,q,h \in L^p.
\end{equation}
\end{lemma}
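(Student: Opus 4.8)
The plan is to first compute the G\^ateaux derivative of $q\mapsto\lambda_1(q)$ by first-order perturbation theory, relying on the analytic dependence of the principal eigenpair on the potential that was established above; then to check that the resulting linear functional is bounded on $L^p$ and coincides with the right-hand side of \eqref{eq:Val}; and finally to upgrade G\^ateaux differentiability to continuous Fr\'echet differentiability by showing that the derivative depends continuously on $q$.

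Fix $q,h\in L^p$ and set $\phi(\varepsilon):=\phi_1(q+\varepsilon h)$ and $\mu(\varepsilon):=\lambda_1(q+\varepsilon h)$. By the analyticity recalled before the statement (Theorems X.8 and X.12 in \cite{Reed4} applied to the type (A) family $\mathcal{L}_{q+\varepsilon h}$), the maps $\varepsilon\mapsto\mu(\varepsilon)\in\mathbb{R}$ and $\varepsilon\mapsto\phi(\varepsilon)$ are analytic near $\varepsilon=0$; the type (A) structure moreover yields analyticity of $\varepsilon\mapsto\phi(\varepsilon)$ in the graph norm of the common domain $D(\mathcal{L}_q)$, so that $\dot\phi:=\phi'(0)\in D(\mathcal{L}_q)\subset W^{1,2}_0$. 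Writing $a(u,v):=\int_\Omega\nabla u\cdot\nabla v\,dx+\int_\Omega q\,uv\,dx$, the weak eigenvalue equation reads
\begin{equation*}
a(\phi(\varepsilon),v)+\varepsilon\int_\Omega h\,\phi(\varepsilon)v\,dx=\mu(\varepsilon)\int_\Omega\phi(\varepsilon)v\,dx,\qquad\forall\,v\in W^{1,2}_0.
\end{equation*}
Taking $v=\phi_1:=\phi_1(q)$, differentiating at $\varepsilon=0$, and using the symmetry of $a(\cdot,\cdot)$ together with the eigenrelation $a(\phi_1,\dot\phi)=\lambda_1(q)\int_\Omega\phi_1\dot\phi\,dx$, the two terms containing $\dot\phi$ cancel and one is left with $\int_\Omega h\,\phi_1^2\,dx=\mu'(0)\,\|\phi_1\|_{L^2}^2$, i.e. $\mu'(0)=\|\phi_1\|_{L^2}^{-2}\int_\Omega\phi_1^2 h\,dx$, which is exactly the right-hand side of \eqref{eq:Val}. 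Since $\phi_1\in C^{1,\alpha}(\overline{\Omega})\subset L^\infty(\Omega)$ by the regularity recalled above, we have $\phi_1^2\in L^\infty\subset L^{p'}$ with $p'=p/(p-1)$, so $h\mapsto\|\phi_1\|_{L^2}^{-2}\int_\Omega\phi_1^2 h\,dx$ is a bounded linear functional on $L^p$ and is a legitimate candidate for $D\lambda_1(q)$.

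It remains to prove that $q\mapsto D\lambda_1(q)$ is continuous from $L^p$ into $(L^p)^*\cong L^{p'}$; by the standard criterion that a map possessing a continuous G\^ateaux derivative is continuously Fr\'echet differentiable, this finishes the proof. Concretely, I would show that $q_n\to q$ in $L^p$ forces $\phi_1^2(q_n)/\|\phi_1(q_n)\|_{L^2}^2\to\phi_1^2(q)/\|\phi_1(q)\|_{L^2}^2$ in $L^{p'}$. The key input is the continuous dependence $\phi_1(q_n)\to\phi_1(q)$ in $L^2$ (after fixing signs): the estimate \eqref{22} shows that the multiplication operator $q_n-q$ is $\mathcal{L}_q$-bounded with relative bound tending to $0$, whence $\mathcal{L}_{q_n}\to\mathcal{L}_q$ in the norm-resolvent sense; because $\lambda_1(q)$ is simple and isolated, the corresponding Riesz spectral projections converge in operator norm, which yields $L^2$-convergence of the normalized principal eigenfunctions. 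A standard elliptic bootstrap — using the equation, the uniform $L^p$-bound on $\{q_n\}$, and the Sobolev embeddings recalled before \eqref{22} — then promotes this to uniform bounds and convergence in $C(\overline{\Omega})$, so that $\phi_1^2(q_n)\to\phi_1^2(q)$ in $L^\infty\subset L^{p'}$ and $\|\phi_1(q_n)\|_{L^2}\to\|\phi_1(q)\|_{L^2}$, giving the required continuity.

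I expect the derivative formula itself to be routine, being a first-order perturbation computation once the analyticity of $\varepsilon\mapsto(\mu(\varepsilon),\phi(\varepsilon))$ is in hand. The main obstacle is the last step: upgrading the line-by-line analytic dependence to genuine continuity of $q\mapsto\phi_1(q)$ in a topology strong enough to control $\phi_1^2$ in $L^{p'}$. This is where the relative-boundedness estimate \eqref{22}, the simplicity and isolation of $\lambda_1$, and elliptic regularity must be combined; some care is needed to make the relative bound of $q_n-q$ genuinely small (e.g. by interpolating the $L^p$–$L^2$ estimate) so that norm-resolvent convergence, and hence convergence of the spectral projections, really holds.
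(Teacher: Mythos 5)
Your derivative computation is exactly the paper's: the paper also tests the perturbed eigenvalue equation against $\phi_1(q)$, invokes the analyticity of $\varepsilon\mapsto(\lambda_1(q+\varepsilon h),\phi_1(q+\varepsilon h))$ recalled before the lemma, and lets the two $\dot\phi$-terms cancel via the eigenrelation to obtain \eqref{eq:Val} as a G\^ateaux derivative; the upgrade to $C^1$ via continuity of the derivative is likewise the paper's device. Where you genuinely diverge is the continuity step. The paper proves $\phi_1(\cdot)\in C(L^p;W^{1,2}_0)$ by hand: first continuity of $\lambda_1(\cdot)$ through a Rayleigh-quotient comparison and a weak-compactness contradiction argument, then strong $W^{1,2}$ convergence of eigenfunctions by writing the equation for the difference $\phi_1(q_k)-\phi_1(q_0)$, testing with the normalized difference, and estimating the right side by $\|q_k-q_0\|_{L^p}$ and $|\lambda_1(q_k)-\lambda_1(q_0)|$; continuity of $D\lambda_1$ then follows from the continuous embedding $W^{1,2}\subset L^{2p/(p-1)}$, valid up to the critical exponent under \eqref{Pas}. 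You instead run the operator-theoretic route: the H\"older--Sobolev estimate behind \eqref{22} gives $\|(q_n-q)\psi\|_{L^2}\le C\|q_n-q\|_{L^p}(\|\Delta\psi\|_{L^2}+\|\psi\|_{L^2})$, hence norm-resolvent convergence $\mathcal{L}_{q_n}\to\mathcal{L}_q$, and since $\lambda_1(q)$ is simple and isolated the Riesz projections converge in norm, delivering convergence of $\lambda_1(q_n)$ and $L^2$-convergence of the (positive, normalized) eigenfunctions in one stroke. This is cleaner than the paper's two-stage contradiction argument and buys the continuity of $\lambda_1$ for free; the paper's variational argument, in exchange, lands directly in $W^{1,2}$, which is the topology actually needed.

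The one soft spot in your plan is the final bootstrap "to convergence in $C(\overline{\Omega})$". That is stronger than required and, at the borderline $p=N/2$ allowed by \eqref{Pas} for $N>4$, it is genuinely delicate: uniform sup-norm bounds along a sequence of potentials converging only in $L^{N/2}$ need an equi-integrability refinement of the Brezis--Kato/Moser iteration and do not follow from quoting the fixed-potential regularity recalled in the preliminaries. But you do not need $L^\infty$: continuity of $q\mapsto\phi_1^2(q)$ in $L^{p'}$ is the same as continuity of $q\mapsto\phi_1(q)$ in $L^{2p/(p-1)}$, and since $2p/(p-1)\le 2N/(N-2)$ under \eqref{Pas}, the continuous Sobolev embedding converts strong $W^{1,2}$ convergence into exactly this; strong $W^{1,2}$ convergence in turn follows from the $L^2$ convergence you already have by testing the difference equation, which is precisely the paper's closing computation. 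Note that the cheap alternative --- interpolating between your $L^2$ convergence and a uniform $W^{1,2}$ bound --- fails exactly at the endpoint $p=N/2$, where $2p/(p-1)=2N/(N-2)$, so the detour through strong $W^{1,2}$ convergence (or your heavier uniform-regularity argument, valid for $p>N/2$) is really needed there.
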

\begin{proof}
	Set $\|\phi_1(q)\|_{L^2}=1$. Observe,
\begin{equation}\label{eq:deriv}
	\frac{d}{d \varepsilon} \lambda_1(q+\varepsilon h)|_{\varepsilon =0}=\int_\Omega \phi_1^2(q) h\, dx, ~~\forall \,q,h \in L^p.
\end{equation}
Indeed, testing equation $\mathcal{L}_{q+\varepsilon h} \phi_1(q+\varepsilon h)=\lambda_1(q+\epsilon h)\phi_1(q+\varepsilon h)$ by $\phi_1(q)$ and integrating by parts one obtains
\begin{align*}
	\int_{\Omega} \phi_1(q+\varepsilon h) (-\Delta \phi_1(q))\, dx + \int_{\Omega} &(q+\varepsilon h)\phi_1(q+\epsilon h)\phi_1(q) \,dx = \\
	&\lambda_1(q+\epsilon h) \int_{\Omega} \phi_1(q+\epsilon h)\phi_1(q) \,dx.
\end{align*}
By the above, all terms in this equality are differentiable with respect to $\varepsilon$. Thus we have
\begin{align*}
	-\int_{\Omega}\Delta \phi_1(q) \frac{d}{d \varepsilon}&\phi_1(q+\varepsilon h)|_{\varepsilon =0} \,dx + \int_{\Omega} h\phi_1^2(q)\, dx +\int_{\Omega} h\frac{d}{d \varepsilon}\phi_1(q+\varepsilon h)|_{\varepsilon =0}\phi_1(q) dx = \\
	&\frac{d}{d \varepsilon}\lambda_1(q+\varepsilon h)|_{\varepsilon =0} \int_{\Omega} \phi_1^2(q)\,dx+\lambda_1(q) \int_{\Omega} \frac{d}{d \varepsilon}\phi_1(q+\varepsilon h)|_{\varepsilon =0}\phi_1(q)\,dx.
\end{align*}
Since $\phi_1(q)$ is an eigenfunction of $\mathcal{L}_{q}$, this implies \eqref{eq:deriv}.

To conclude the proof of the lemma, it is sufficient to show that 
\begin{equation}\label{eq:ContW}
	\phi_1(\cdot) \in C(L^p; W^{1,2}_0(\Omega)).
\end{equation}
Indeed, assume \eqref{eq:ContW} is true. Since \eqref{Pas}, by the Sobolev theorem, 
the embedding $W^{1,2}(\Omega) \subset L^\frac{2p}{(p-1)}$ is continuous. Hence the map $\phi_1(\cdot): L^p \to L^\frac{2p}{(p-1)}\cap L^2$ is continuous and therefore the norm of Gateaux derivative $D\lambda_1(q)$ continuously depends on $q \in L^p$. This implies  that $\lambda_1(q)$ is continuously differentiable in $L^p$. 

To prove \eqref{eq:ContW}, let us  first show that $\lambda_1(q)$  defines a continuous map in 
$L^p$. Suppose, contrary to our claim, that there is a sequence $(q_n)$ such that $q_n \to q$ in $L^p$ as $n \to \infty$ and
$|\lambda_1(q_n)-\lambda_1(q)|>\epsilon$ for some $\epsilon >0$, $n=1,2,...$. 
Consider Rayleigh's quotient
\begin{equation}\label{eq:CFR}
	\lambda_1(q_n)\equiv R_{q_n}(\phi_1(q_n)):=\frac{\|\phi_1(q_n)\|_1^2+\int_0^1q_n\phi_1^2(q_n)\,dx}{\|\phi_1(q_n)\|_{L^2}^2}~~~n=1,2,....
\end{equation}
It is easily seen that
\begin{equation}\label{eq:CF}
	R_{q_n}(\phi_1(q)) \to R_{q}(\phi_1(q))=\lambda_1(q) \,\, \mbox{as}\,\, n \to \infty.
\end{equation}
Hence and  since $\lambda_1(q_n)=R_{q_n}(\phi_1(q_n))\leq R_{q_n}(\phi_1(q))$, $n=1,2,...$, we conclude that $\lambda_1(q_n)=R_{q_n}(\phi_1(q_n))<C_0<+\infty$, where $C_0<+\infty$ does not depend on $n=1,2,...$.  Due to homogeneity of  $R_{q_n}(\phi_1(q_n))$ we may assume that $\|\phi_1(q_n)\|_1^2=1$ for all $n$. Hence the Banach-Alaoglu and Sobolev
theorems imply that there is a subsequence, which we again denote by $(\phi_1(q_n))$, such that $\phi_1(q_n) \to \bar{\phi}$  as $n \to \infty$ weakly in $W^{1,2}$ and strongly in $L^q$ for $q\in [2,2^*)$, where $2^*=2N/(N-2)$ if $N>2$ and $2^*=+\infty$ if $N\leq 2$. Observe that if $\bar{\phi}=0$, then $\|\phi_1(q_n)\|_{L^2} \to 0$ and $\int_0^1q_n\phi_1^2(q_n) \to 0$ as $n \to \infty$, which implies by \eqref{eq:CFR} that $\lambda_1(q_n) \to +\infty$. We get a contradiction. Thus $\bar{\phi}\neq 0$ and therefore $|\lambda_1(q_n)|<C$, where $C<+\infty$ does not depend on $n=1,2,...$. 
Hence, in view of \eqref{eq:CF}, we conclude that 
$$
\lambda_1(q)=R_{q}(\phi_1(q))\leq R_{q}(\bar{\phi}) \leq \liminf_{n\to \infty}R_{q_n}(\phi_1(q_n))\leq \lim_{n\to \infty}R_{q_n}(\phi_1(q))=\lambda_1(q),
$$
which contradicts to our assumption. Thus, indeed, the map $\lambda_1(\cdot): L^p \to \mathbb{R}$ is continuous.

Take $q,q_0 \in L^p$. Then 
\begin{align*}
-\Delta(\phi_1(q_0)-\phi_1(q))+q_0(\phi_1(q_0)-\phi_1(q))-\lambda_1(q_0)(\phi_1(q_0)-\phi_1(q))+&\\
(q_0-q)\phi_1(q)-(\lambda_1(q_0)-\lambda_1(q))\phi_1(q)=0&.
\end{align*}
Testing this equation by $(\phi_1(q)-\phi_1(q_0))$ and integrating by parts, we obtain
\begin{align*}
&\|\phi_1(q_0)-\phi_1(q)\|_1^2+ \int_\Omega q_0(\phi_1(q_0)-\phi_1(q))^2\,dx-\lambda_1(q_0)\int_\Omega (\phi_1(q_0)-\phi_1(q))^2\,dx +\\
&\int_\Omega(q_0-q)\phi_1(q)(\phi_1(q_0)-\phi_1(q))\,dx-(\lambda_1(q_0)-\lambda_1(q))\int_\Omega \phi_1(q)(\phi_1(q_0)-\phi_1(q))\,dx=0.
\end{align*} 
Let $q_k \to q_0$ in $L^p$ as $k \to \infty$. We may assume that $\|\phi_1(q_k)\|_1=1$, $k=1,2,...$. Set $t_k:=\|\phi_1(q_k)-\phi_1(q_0)\|_1$, $\psi_k:=(\phi_1(q_k)-\phi_1(q_0))/t_k$, $k=1,2,...$. Then $0<t_k<1+\|\phi_1(q_0)\|_{1}:=C_1<+\infty$, $\|\psi_k\|_1=1$ and
 \begin{align}\label{eV:tk}
&t_k\left(1+ \int_\Omega q_0\psi_k^2\,dx-\lambda_1(q_0)\int_\Omega \psi_k^2\,dx\right) =\nonumber\\
&-\int_\Omega(q_0-q_k)\phi_1(q_k)\psi_k\,dx+(\lambda_1(q_0)-\lambda_1(q_k))\int_\Omega \phi_1(q_k)\psi_k\,dx,\,\, k=1,2,....
\end{align} 
By H\"older's inequality and the Sobolev theorem, we have
\begin{align*}
&
|\int_\Omega(q_k-q_0)\phi_1(q_k)\psi_k\,dx|\leq \|q_k-q_0\|_{L^p}\|\phi_1(q_k)\|_{L^\frac{2p}{(p-1)}}\|\psi_k\|_{L^\frac{2p}{(p-1)}}\leq C_2\|q_k-q_0\|_{L^p}, \\
&|(\lambda_1(q_k)-\lambda_1(q_0))\int_\Omega \phi_1(q_k)\psi_k\,dx|\leq 
 C_2|\lambda_1(q_k)-\lambda_1(q_0)|,
\end{align*} 
where $C_2<+\infty$ does not depend on $k=1,2,...$. Hence and from \eqref{eV:tk} it follows that $t_k:=\|\phi_1(q_k)-\phi_1(q_0)\|_1 \to 0$ as $q_k \to q_0$ in $L^p$. Thus we get \eqref{eq:ContW}.
\end{proof}

\begin{lemma}\label{lem2}
	 $\lambda_1(q)$ is strictly concave functional in $L^p$.
\end{lemma}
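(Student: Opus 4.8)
The plan is to read off both properties directly from the variational characterization \eqref{lambda1}. The structural observation is that for each fixed $\phi \in W^{1,2}_0\setminus 0$ the Rayleigh quotient
\[
R_q(\phi)=\frac{\int_{\Omega} |\nabla \phi|^2\,dx+\int_{\Omega} q\phi^2\,dx}{\int_{\Omega}\phi^2\,dx}
\]
is an \emph{affine} function of $q\in L^p$: the Dirichlet term is independent of $q$, while $q\mapsto \int_\Omega q\phi^2\,dx$ is linear and continuous on $L^p$ (by Hölder and the Sobolev embedding already used in Lemma \ref{lem1}). Since $\lambda_1(q)=\inf_{\phi}R_q(\phi)$ is a pointwise infimum of affine functionals of $q$, concavity is automatic and entirely routine.

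For strictness I would fix $q_0\neq q_1$ in $L^p$ and $t\in(0,1)$, set $q_t=tq_0+(1-t)q_1$, and test with the normalized principal eigenfunction $\phi_t:=\phi_1(q_t)$. The affine dependence above yields
\[
\lambda_1(q_t)=R_{q_t}(\phi_t)=tR_{q_0}(\phi_t)+(1-t)R_{q_1}(\phi_t)\geq t\lambda_1(q_0)+(1-t)\lambda_1(q_1),
\]
where the inequality is just \eqref{lambda1} applied to the admissible test function $\phi_t$ for $q_0$ and for $q_1$ separately. Because $t,1-t>0$, equality can occur only if $R_{q_0}(\phi_t)=\lambda_1(q_0)$ and $R_{q_1}(\phi_t)=\lambda_1(q_1)$ simultaneously; that is, $\phi_t$ must be a principal eigenfunction of both $\mathcal{L}_{q_0}$ and $\mathcal{L}_{q_1}$.

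The decisive step is to rule this out. Subtracting $-\Delta\phi_t+q_0\phi_t=\lambda_1(q_0)\phi_t$ from $-\Delta\phi_t+q_1\phi_t=\lambda_1(q_1)\phi_t$ leaves $(q_0-q_1)\phi_t=(\lambda_1(q_0)-\lambda_1(q_1))\phi_t$ a.e. in $\Omega$. Here I would invoke the positivity $\phi_t>0$ recorded in the Preliminaries (weak Harnack inequality and strong maximum principle), which lets me divide by $\phi_t$ and conclude that $q_0-q_1$ equals the constant $\lambda_1(q_0)-\lambda_1(q_1)$ a.e.

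I expect this last reduction to be the genuine obstacle, and it is also the point requiring care. The argument delivers a strict inequality for every pair $q_0,q_1$ whose difference is not a.e.\ constant, but along the one-dimensional direction of additive constants the functional is exactly affine, since $\lambda_1(q+c)=\lambda_1(q)+c$ for $c\in\mathbb{R}$. Thus what the proof can actually secure is strict concavity modulo constants, and the statement should be read with this trivial gauge direction factored out. Apart from this subtlety no new estimates are needed: the affine structure, the positivity of $\phi_t$, and the subtraction trick are all already available.
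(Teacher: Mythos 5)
Your proof follows the same variational route as the paper's --- both write $\lambda_1(q_t)$ as the Rayleigh quotient of the principal eigenfunction $\phi_t$ of the interpolated potential, split the potential term affinely, and bound each piece below by $\lambda_1(q_i)$ via \eqref{lambda1} --- but you are more careful at the one step that actually matters. The paper simply writes the final inequality as strict with no justification: testing \eqref{lambda1} with $\phi_t$ only yields $R_{q_i}(\phi_t)\geq\lambda_1(q_i)$, and strictness requires ruling out that $\phi_t$ minimizes both Rayleigh quotients simultaneously. You supply exactly this missing argument (equality forces $\phi_t$ to be a common principal eigenfunction; subtracting the two eigenvalue equations and dividing by $\phi_t>0$, which the Preliminaries guarantee via the weak Harnack inequality, forces $q_1-q_2$ to be a.e.\ constant), and in doing so you correctly expose that the lemma as literally stated is false: along the direction of additive constants one has $\lambda_1(q+c)=\lambda_1(q)+c$, so $\lambda_1$ is exactly affine there and only ``strict concavity modulo constant shifts'' can hold. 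This gap does not propagate into the main results: convexity of $M_\lambda$ needs only plain concavity, and the uniqueness claims in Theorems \ref{thm1} and \ref{thm2} can be run off the strict convexity of $Q(q)=\|q_0-q\|_{L^p}^p$ on the convex set $M_\lambda$ (a local minimizer of a strictly convex functional on a convex set is the unique global one), rather than off strict concavity of $\lambda_1$. So your proposal is not merely correct --- it repairs the paper's proof and sharpens the statement to its true form.
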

\begin{proof}
Let $q_1,q_2 \in L^p \setminus 0$. Denote $\phi_1^t:=\phi_1(t q_1+(1-t) q_2)$. Assume that $\|\phi_1^t\|_{L^2}=1$, $t \in [0,1]$. Then due to \eqref{lambda1} we have
\begin{align*}
\lambda_1(t q_1+(1-t) q_2)=&	\int_{\Omega}|\nabla \phi_1^t|^2\, dx+\int_{\Omega}(t q_1+(1-t) q_2)|\phi_1^t|^2\, dx =\\
& t( \int_{\Omega}|\nabla \phi_1^t|^2\, dx+\int_{\Omega}q_1|\phi_1^t|^2\, dx) +(1-t)( \int_{\Omega}|\nabla \phi_1^t|^2\, dx+\int_{\Omega}q_2|\phi_1^t|^2\, dx) >\\
&t\lambda_1( q_1) +(1-t)\lambda(q_2),~~\forall t \in (0,1),
\end{align*}
which yields the proof.
\end{proof}

\section{Proof of the main results}

{\it Proof of Theorem \ref{thm1}.}

Let $q_0 \in L^p$ and $\lambda>\lambda_1(q_0)$. Consider the constrained minimization problem
\begin{equation}\label{MinP}
\hat{Q}=\min\{Q(q):q \in M_{\lambda}\},
\end{equation}
where $Q(q):=||q_0-q||^p_{L^p}$ for $q \in L^p(\Omega)$ and
$$
M_{\lambda}:=\{q \in L^p(\Omega):~~\lambda\leq \lambda_1(q)\}.
$$
Notice that $M_{\lambda} \neq \emptyset$. Indeed, $\lambda=\lambda_1(q_0+\lambda-\lambda_1(q_0))$, $\forall \lambda \in \mathbb{R}$ and thus   $q_0+\lambda-\lambda_1(q_0) \in M_{\lambda}$ for any $\lambda>\lambda_1(q_0)$.  Moreover, by Lemma \ref{lem2}, $M_{\lambda}$ is convex. Hence, by coerciveness of $Q: L^p \to \mathbb{R}$ there exists a minimizer $\hat{q} \in M_{\lambda}$ of \eqref{MinP}. Since the strong inequality $\lambda>\lambda_1(q_0)$, it follows that $\hat{q} \neq 0$. The convexity of $M_{\lambda}$ and $Q$ entails that $\hat{q}$ is unique and that 
$$
\hat{q} \in \partial M_{\lambda}= \{q\in L^p:~~\lambda=\lambda_1(q)\}.
$$
This concludes the proof of assertion $(1^o)$ of Theorem \ref{thm1}.

Evidently $Q$ is $C^1$-functional in $L^p$. Hence, in view of Lemma \ref{lem1}, the Lagrange multiplier rule implies 
\begin{equation}
	\mu_1 DQ(\hat{q})(h)+\mu_2D\lambda_1(\hat{q})(h) =0,~~ \forall h \in L^p,
\end{equation}
where $\mu_1,\mu_2 $ such that $|\mu_1|+|\mu_2|\neq 0$, $\mu_1 \geq 0$, $\mu_2 \leq 0$. Thus by \eqref{eq:Val} we deduce
\begin{equation}
	\int_\Omega (-\mu_1 p (q_0-\hat{q})|q_0-\hat{q}|^{p-2}+\mu_2\phi_1^2(\hat{q})) h\, dx  =0, \,\, \forall h \in L^p,
\end{equation}
where $\|\phi_1^2(\hat{q})\|_{L^2}=1$. Arguing by contradiction, it is easily to conclude that
 $\mu_1 > 0,\mu_2< 0$. Thus we have 
$$
(q_0-\hat{q})|q_0-\hat{q}|^{p-2}=\mu \phi_1^2(\hat{q})~~\mbox{a.e. in}~~\Omega,
$$
where $\mu=\mu_2/(p \mu_1)<0$. Notice that $\phi_1>0$ in $\Omega$. Hence $q_0<\hat{q}$ a.e. in $\Omega$ and 
\begin{equation}
	\hat{q}=q_0+\nu \phi_1^{2/(p-1)}(\hat{q})~~\mbox{a.e. in}~~\Omega,
\end{equation}
where $\nu:=(-\mu)^{1/(p-1)}>0$.
Substituting this into \eqref{eq:S} yields
\begin{equation}\label{eq:phi}
- \Delta \phi_1(\hat{q})+q_0 \phi_1(\hat{q})={\lambda} \phi_1(\hat{q})-\nu\phi_1^\frac{p+1}{p-1}(\hat{q}).
\end{equation}
Thus, indeed, $\hat{u}=\nu^\frac{p-1}{2}\phi_1(\hat{q})$ satisfies \eqref{eq:Nonl}.  Moreover,  $\hat{q}=q_0+\hat{u}^{2/(p-1)}$ a.e. in $\Omega$. This concludes the proof of $(2^o)$.

\medskip

\noindent
{\it Proof of Theorem \ref{thm2}.}
Since  (\ref{eq:Nonl}$'$) is obtained from \eqref{eq:Nonl}  by replacing $\gamma=\frac{2p}{p-1}$, it is sufficient to prove the uniqueness of the solution for \eqref{eq:Nonl}.

First we prove 
\begin{lemma}\label{lem:Nonl}
Let $u \in W^{1,2}_0$ be a nonnegative weak solution of \eqref{eq:Nonl}. Then the function $\bar{q}:=q_0+u^{2/(p-1)}$ is a local minimum point of $Q$  in $M_{\lambda}$. 
\end{lemma}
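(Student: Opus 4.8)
The plan is to verify that $\bar q$ satisfies the first-order optimality condition for the minimization problem \eqref{MinP} and then to exploit the convex–concave structure already established in Lemmas \ref{lem1} and \ref{lem2}. First I would rewrite the equation \eqref{eq:Nonl}. Since $u^{(p+1)/(p-1)}=u\cdot u^{2/(p-1)}$, the solution $u$ satisfies $-\Delta u+\bar q\,u=\lambda u$ with $\bar q=q_0+u^{2/(p-1)}$; that is, $u$ is a nonnegative weak eigenfunction of $\mathcal{L}_{\bar q}$ with eigenvalue $\lambda$. Before this is meaningful I must check $\bar q\in L^p$: by the elliptic regularity argument used for $\hat u$ in Theorem \ref{thm1} one has $u\in C^{1,\beta}(\overline\Omega)\subset L^\infty$, so $u^{2/(p-1)}\in L^\infty\subset L^p$. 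A nonnegative, nontrivial eigenfunction must correspond to the principal eigenvalue: by the weak Harnack inequality $u>0$ in $\Omega$, and any eigenfunction for $\lambda_k(\bar q)$ with $k\ge 2$ is $L^2$-orthogonal to $\phi_1(\bar q)>0$ and hence cannot be nonnegative. Therefore $\lambda=\lambda_1(\bar q)$, so that $\bar q\in\partial M_\lambda\subset M_\lambda$ and $\phi_1(\bar q)=u/\|u\|_{L^2}$.

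Next I would compute the two Fréchet derivatives at $\bar q$. Since $\bar q-q_0=u^{2/(p-1)}\ge 0$, differentiating $Q(q)=\|q_0-q\|_{L^p}^p$ gives
\begin{equation*}
DQ(\bar q)(h)=-p\int_\Omega |q_0-\bar q|^{p-2}(q_0-\bar q)\,h\,dx=p\int_\Omega u^2 h\,dx,\qquad h\in L^p,
\end{equation*}
while Lemma \ref{lem1} together with $\phi_1(\bar q)=u/\|u\|_{L^2}$ yields $D\lambda_1(\bar q)(h)=\|u\|_{L^2}^{-2}\int_\Omega u^2 h\,dx$. Thus the two derivatives are \emph{positively} proportional,
\begin{equation*}
DQ(\bar q)=p\,\|u\|_{L^2}^2\,D\lambda_1(\bar q),
\end{equation*}
so the Lagrange condition for \eqref{MinP} holds automatically with positive multipliers. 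This proportionality is the decisive algebraic fact of the proof.

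Finally I would convert this into minimality. For any $q\in M_\lambda$ the concavity of $\lambda_1$ (Lemma \ref{lem2}) gives $\lambda_1(q)\le\lambda_1(\bar q)+D\lambda_1(\bar q)(q-\bar q)$, and since $\lambda_1(q)\ge\lambda=\lambda_1(\bar q)$ we obtain $D\lambda_1(\bar q)(q-\bar q)\ge 0$; by the proportionality, $DQ(\bar q)(q-\bar q)\ge 0$. As $Q$ is convex, $Q(q)\ge Q(\bar q)+DQ(\bar q)(q-\bar q)\ge Q(\bar q)$, so $\bar q$ is in fact a global—hence a fortiori a local—minimizer of $Q$ on $M_\lambda$, which is the assertion. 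I expect the main obstacle to be the first step: securing $\bar q\in L^p$ through regularity and, above all, identifying $u$ as the \emph{principal} eigenfunction of $\mathcal{L}_{\bar q}$, since it is precisely the positivity of $u$ that forces $\lambda=\lambda_1(\bar q)$ and unlocks the derivative computation; once this is in hand, the conclusion is a routine consequence of the convexity of $Q$ and the concavity of $\lambda_1$.
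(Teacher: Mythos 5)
Your proof is correct, and it takes a genuinely different route from the paper's. Both arguments begin the same way: elliptic regularity and the weak Harnack inequality give $u>0$ in $\Omega$, so that $u$ is (up to normalization) the principal eigenfunction $\phi_1(\bar q)$ and $\lambda=\lambda_1(\bar q)$, placing $\bar q$ on $\partial M_\lambda$. From there the paper invokes the Lusternik theorem to describe the tangent space $T_{\bar q}(\partial M_\lambda)=\{h\in L^p:\int_\Omega u^2 h\,dx=0\}$, checks that $DQ(\bar q)$ vanishes on it while $D_{qq}Q(\bar q)(h,h)=p(p-1)\int_\Omega u^{2(p-2)/(p-1)}h^2\,dx>0$ there, and concludes local minimality from this second-order test along tangent directions. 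You instead extract the positive proportionality $DQ(\bar q)=p\|u\|_{L^2}^2\,D\lambda_1(\bar q)$ and combine the concavity of $\lambda_1$ (Lemma \ref{lem2}) with the convexity of $Q$: for any $q\in M_\lambda$ one has $0\le\lambda_1(q)-\lambda_1(\bar q)\le D\lambda_1(\bar q)(q-\bar q)$, hence $DQ(\bar q)(q-\bar q)\ge 0$ and $Q(q)\ge Q(\bar q)+DQ(\bar q)(q-\bar q)\ge Q(\bar q)$. This buys you strictly more than the lemma asserts: \emph{global} rather than local minimality, with no second-order computation and no appeal to Lusternik's theorem; it also sidesteps a soft spot in the paper's argument, where positivity of the second variation restricted to the tangent space of $\partial M_\lambda$ is used to infer a local minimum over the full constraint set $M_\lambda$, even though nearby points of $M_\lambda$ need not lie along tangent directions. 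Your stronger conclusion moreover streamlines the proof of Theorem \ref{thm2}: since $Q$ is strictly convex and $M_\lambda$ is convex, the global minimizer on $M_\lambda$ is unique, so any two solutions $u,\bar u$ of \eqref{eq:Nonl} satisfy $q_0+u^{2/(p-1)}=q_0+\bar u^{2/(p-1)}$ and therefore $u=\bar u$ at once.
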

\begin{proof}
Let $u  \in W^{1,2}_0$ be a nonnegative weak solution of \eqref{eq:Nonl}.  The  regularity solutions for elliptic equations (see, e.g.,  Lemma B 3 in \cite{struw}) implies that $u \in W^{2,q}(\Omega)$ for any $q\geq 2$ and therefore  $u \in C^{1,\alpha}(\overline{\Omega})$  for any $\alpha\in (0,1)$. By the weak Harnack inequality (see Theorem 5.2. in \cite{Trud}) it follows that $u>0$ in $\Omega$. This implies that $u$ is an eigenfunction of $\mathcal{L}_{\bar{q}}$ corresponding to the principal eigenvalue $\lambda =\lambda_1(\bar{q})$, i.e., $u=\phi_1(\bar{q})$. In view of Lemma \ref{lem1}, by the Lusternik theorem \cite{lust}  the tangent space of $\partial M_\lambda$ at $ q \in \partial M_\lambda$ can be expressed as follows
\begin{equation}\label{Tangent}
	T_{q}(\partial M_\lambda):=\{h \in L^p:~D\lambda_1(q)( h)\equiv \int_\Omega u^2\cdot h\, dx = 0\}.
\end{equation}
From this 
$$
D_qQ(q)|_{q=q_0+u^{2/(p-1)}}(h)=p\int_\Omega u^2\cdot h\, dx = 0, ~~\forall h \in T_{\bar{q}}(\partial M_\lambda).
$$ 
Hence and since
$$
D_{qq}Q(q)|_{q=q_0+u^{2/(p-1)}}(h,h)=p(p-1)\int_\Omega u^\frac{2(p-2)}{p-1}\cdot h^2\, dx>0,~~\forall h \in T_{q}(\partial M_\lambda),
$$
we obtain that 
$$
Q(\bar{q}+h)> Q(\bar{q}),
$$ 
for any  $h \in T_{\bar{q}}(\partial M_\lambda)$ with sufficient small $\|h\|_{L^p}$. 
\end{proof}

Let us conclude the proof of Theorem \ref{thm2}.
By Theorem \ref{thm1} we know that \eqref{eq:Nonl} possess a solution $\hat{u}$ such that the functional $Q$ admits a global minimum at $\hat{q}=q_0+\hat{u}^{2/(p-1)}$ on $M_{\lambda}$. Assume that there exists a second weak solution $\bar{u}$ of \eqref{eq:Nonl}. Then by Lemma \ref{lem:Nonl}, $\bar{q}=q_0+\bar{u}^{2/(p-1)}$ is a  local minimum point of $Q$  in $M_{\lambda}$. However, due to strict convexity of $\lambda_1(q)$ and $Q(v)$ this is possible  only if 
 $\bar{q}=\hat{u}$.

\section{Stability results}
In this section, we prove that the solution $\hat{q}$ of the inverse optimization spectral problem $(P)$ is stable with respect to variation of $q_0$ and $\lambda$.

Let $q_0 \in L^p$, $\lambda >\lambda_1(q_0)$. Denote by $\hat{q}(\lambda, q_0)$ the unique solution of $(P)$ obtained by Theorem \ref{thm1} and denote by $\hat{u}(\lambda, q_0)$ the corresponding solution of \eqref{eq:Nonl}. 

\begin{proposition}\label{Prop1}
\begin{description}
	\item[(i)] For any $\lambda \in (\lambda_1(q_0),+\infty)$, 
	the map $\hat{u}(\lambda, \cdot):  L^p \to W^{1,2}_0$ is continuous and thus $\hat{q}(\lambda, q_0)$ continuously depends on $q_0$ as a map from $L^p$ to $L^p$.
	\item[(ii)] For any $q_0 \in L^p$, 
	the map $\hat{u}(\cdot, q_0):  (\lambda_1(q_0),+\infty) \to W^{1,2}_0$ is continuous and thus $\hat{q}(\lambda,q_0)$ continuously depends on $q_0$ as a map from $L^p$ to $L^p$. Moreover, $\hat{q}(\lambda,q_0) \to q_0$ in  $L^p$ as $\lambda \downarrow \lambda_1(q_0)$.
\end{description}
\end{proposition}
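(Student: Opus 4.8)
The plan is to reduce both parts to a single stability statement: for fixed $\lambda$, the minimizer of the constrained problem \eqref{MinP} depends continuously in $L^p$ on the datum $q_0$. Everything else then follows from the explicit relations $\hat q=q_0+\hat u^{2/(p-1)}$ and $\hat u=\nu^{(p-1)/2}\phi_1(\hat q)$ of Theorem \ref{thm1}, from the translation covariance $\lambda_1(q+c)=\lambda_1(q)+c$, and from the continuity $\phi_1(\cdot)\in C(L^p;W^{1,2}_0)$ proved in \eqref{eq:ContW}.

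For part (i), fix $\lambda>\lambda_1(q_0)$ and take $q_0^n\to q_0$ in $L^p$; write $\hat q_n:=\hat q(\lambda,q_0^n)$ and $Q_n(q):=\|q_0^n-q\|_{L^p}^p$. First I would bound the $\hat q_n$: the competitor $q_0^n+\lambda-\lambda_1(q_0^n)\in M_\lambda$ gives $Q_n(\hat q_n)\le|\lambda-\lambda_1(q_0^n)|^p|\Omega|$, which stays bounded because $\lambda_1$ is continuous (established inside Lemma \ref{lem1}), so $(\hat q_n)$ is bounded in $L^p$. Passing to a weak limit $\hat q_n\rightharpoonup\bar q$, and using that $M_\lambda$ is convex (Lemma \ref{lem2}) and closed (continuity of $\lambda_1$), hence weakly closed, I get $\bar q\in M_\lambda$. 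The decisive point is that $M_\lambda$ does not depend on $q_0$, so the true minimizer $\hat q=\hat q(\lambda,q_0)$ is admissible for every $Q_n$: this yields $Q_n(\hat q_n)\le Q_n(\hat q)\to Q(\hat q)$. Combining this with weak lower semicontinuity of the norm and with $\big|\,\|q_0^n-\hat q_n\|_{L^p}-\|q_0-\hat q_n\|_{L^p}\big|\le\|q_0^n-q_0\|_{L^p}\to0$ forces $\|q_0-\bar q\|_{L^p}\le\|q_0-\hat q\|_{L^p}$, so $\bar q=\hat q$ by uniqueness and $\|q_0-\hat q_n\|_{L^p}\to\|q_0-\hat q\|_{L^p}$. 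Since $p\ge2$ under \eqref{Pas}, $L^p$ is uniformly convex, and weak convergence together with convergence of norms upgrades to strong convergence $\hat q_n\to\hat q$ in $L^p$; the limit being independent of the subsequence, the whole sequence converges.

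To pass from $\hat q_n\to\hat q$ in $L^p$ to $\hat u_n\to\hat u$ in $W^{1,2}_0$, I would invoke \eqref{eq:ContW} to obtain $\phi_1(\hat q_n)\to\phi_1(\hat q)$ in $W^{1,2}_0$, hence in $L^{2p/(p-1)}$ by the Sobolev embedding used in Lemma \ref{lem1}. Writing $\hat u_n=\nu_n^{(p-1)/2}\phi_1(\hat q_n)$ with the scaling factor determined by $\nu_n\|\phi_1(\hat q_n)\|_{L^{2p/(p-1)}}^{2/(p-1)}=\|\hat q_n-q_0^n\|_{L^p}$, the convergences already proved force $\nu_n\to\nu>0$, whence $\hat u_n\to\hat u$ in $W^{1,2}_0$. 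The continuity of $\hat q(\lambda,\cdot)$ in $L^p$ then either follows directly from the step above or is recovered from $\hat q=q_0+\hat u^{2/(p-1)}$ through continuity of the Nemytskii map $v\mapsto v^{2/(p-1)}$ from $L^{2p/(p-1)}$ to $L^p$. Part (ii) I would deduce from part (i) by translation covariance rather than a fresh compactness argument: since $\lambda'\le\lambda_1(q)$ iff $\lambda\le\lambda_1\big(q-(\lambda'-\lambda)\big)$, the set $M_{\lambda'}$ is the translate $M_\lambda+(\lambda'-\lambda)$, and the substitution $q=\tilde q+(\lambda'-\lambda)$ identifies the minimization over $M_{\lambda'}$ with that of $\|(q_0-(\lambda'-\lambda))-\tilde q\|_{L^p}$ over $M_\lambda$. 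This gives $\hat q(\lambda',q_0)=\hat q(\lambda,q_0-(\lambda'-\lambda))+(\lambda'-\lambda)$ and, the two potentials differing by a constant, $\hat u(\lambda',q_0)=\hat u(\lambda,q_0-(\lambda'-\lambda))$; as $\lambda'\to\lambda$ the shift tends to $0$ in $L^p$, so part (i) yields continuity in $\lambda$. The final claim is immediate from the competitor bound: $\|\hat q(\lambda,q_0)-q_0\|_{L^p}\le|\lambda-\lambda_1(q_0)|\,|\Omega|^{1/p}\to0$ as $\lambda\downarrow\lambda_1(q_0)$.

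The main obstacle is the stability step for the constrained minimizer: the competitor comparison by itself delivers only convergence of the minimum values and weak subsequential limits, and it is precisely the uniform convexity of $L^p$ (available because $p\ge2$ under \eqref{Pas}) that converts these into the strong $L^p$ convergence needed to feed \eqref{eq:ContW} and extract convergence of the scaling factor $\nu_n$. Once part (i) is in hand, the translation covariance renders the dependence on $\lambda$ essentially automatic.
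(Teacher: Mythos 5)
Your proof is correct, but it runs in the opposite direction from the paper's. The paper works on the PDE side: it takes $q_n\to q_0$, proves directly that $t_n=\|\hat u(\lambda,q_n)\|_1$ is bounded and bounded away from zero (two contradiction arguments using the normalized functions $v_n=\hat u(\lambda,q_n)/t_n$ and the fact that $\lambda>\lambda_1(q_0)$), extracts a weak $W^{1,2}$ limit, passes to the limit in the equation \eqref{eq:Nonl}, identifies the limit as $\hat u(\lambda,q_0)$ via the uniqueness of Theorem \ref{thm2}, and only then deduces $L^p$-convergence of $\hat q$ through the Nemytskii map; part (ii) is declared ``similar,'' and the endpoint claim $\hat q\to q_0$ as $\lambda\downarrow\lambda_1(q_0)$ is proved indirectly, by testing \eqref{eq:Nonl} at $\lambda=\lambda_1(q_0)$ against $\phi_1(q_0)$ to rule out nontrivial solutions. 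You instead prove stability of the constrained minimizer first: competitor bound, weak closedness of the convex set $M_\lambda$ (which requires the continuity of $\lambda_1$ established inside Lemma \ref{lem1}), weak lower semicontinuity plus uniqueness of the minimizer from Theorem \ref{thm1}$(1^o)$, and the Radon--Riesz upgrade in the uniformly convex space $L^p$; then you transfer to $\hat u$ through \eqref{eq:ContW} and convergence of the scalar $\nu_n$. Your route buys three genuine improvements: part (ii) becomes automatic via the translation covariance $\lambda_1(q+c)=\lambda_1(q)+c$ and the identity $\hat q(\lambda',q_0)=\hat q(\lambda,q_0-(\lambda'-\lambda))+(\lambda'-\lambda)$, rather than requiring a second compactness argument; the endpoint claim follows from the one-line competitor bound $\|\hat q(\lambda,q_0)-q_0\|_{L^p}\le(\lambda-\lambda_1(q_0))|\Omega|^{1/p}$, which is both simpler than the paper's testing argument and quantitative (an explicit rate); and you use the uniqueness of the minimizer rather than Theorem \ref{thm2}, so your proof of the Proposition does not depend on the uniqueness theorem for the nonlinear problem. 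What the paper's approach buys in exchange is independence from \eqref{eq:ContW} at this stage and a template that generalizes to settings where the minimization problem is unavailable. Two small points to tidy in your write-up: the scaling relation $\nu_n\|\phi_1(\hat q_n)\|_{L^{2p/(p-1)}}^{2/(p-1)}=\|\hat q_n-q_0^n\|_{L^p}$ presupposes a fixed normalization of $\phi_1$ consistent with the one in \eqref{eq:ContW} (say $\|\phi_1\|_1=1$, $\phi_1>0$), which should be stated; and your $\nu>0$ (needed so that $\nu_n^{(p-1)/2}$ converges to a positive limit) deserves the explicit remark that $\lambda>\lambda_1(q_0)$ forces $\hat q\ne q_0$ --- this is your analogue of the paper's step showing $t_n$ is separated from zero.
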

\begin{proof}
	 We give the proof of \textbf{(i)} only for the case $N\geq 3$; the other cases are left to the
reader. Let $\lambda \in (\lambda_1(q_0),+\infty)$. Assume $q_n \to q_0$ in $L^p$ as $n\to \infty$. By the above,  $\lambda_1(q)$ continuously depends on $q\in L^p$. Thus for sufficiently large $n$ we have $\lambda>\lambda_1(q_n)$.

We claim that the sequence $\|\hat{u}(\lambda, q_n)\|_1$, $n=1,2,...$ is bounded and separated from zero. 
Set $t_n:=\|\hat{u}(\lambda, q_n)\|_1 $,  $v_n:=\hat{u}(\lambda, q_n)/t_n$, $n=1,2,...$.
Since $\|v_n\|_1=1$,  $n=1,2,...$, by the Banach-Alaoglu and Sobolev
theorems we may assume that $v_n \rightharpoondown v$ weakly in $W^{1,2}$, $v_n \to v$ a.e. in $\Omega$ and strongly in $L^q$, $2\leq q <2N/(N-2)$ for some $v \in W^{1,2}$.	 It follows from \eqref{eq:Nonl}
\begin{equation}\label{eq:st1}
	\|v_n\|_1+\int_\Omega q_n v_n^2\,dx-\lambda \int_\Omega v_n^2\,dx+t_n^\frac{2}{p-1}\int_\Omega v_n^\frac{2p}{p-1}\,dx=0, ~~n=1,2,....
\end{equation}
 By H\"older's inequality
	$$
	|\int_\Omega q_n v_n^2\,dx|\leq \|q_n\|_{L^p}\|v_n\|^2_{L^{2p/(p-1)}}, ~~n=1,2,..., 
	$$
	where, in view of \eqref{Pas}, we have $2<2p/(p-1)<2N/(N-2)$. Suppose that $v=0$. Then 
$$
\|v_n\|_1+\int_\Omega q_n v_n^2\,dx-\lambda \int_\Omega v_n^2\,dx+t_n^\frac{2}{p-1}\int_\Omega v_n^\frac{2p}{p-1}\,dx\geq \|v_n\|_1+\int_\Omega q_n v_n^2\,dx-\lambda \int_\Omega v_n^2\,dx \to 1,
$$	
as $n\to +\infty$, which contradicts to  \eqref{eq:st1}. Thus $v \neq 0$ and therefore by \eqref{eq:st1} the sequence $t_n$ is bounded. Assume, by contradiction, that  $t_n \to 0$. Then passing to the limit in \eqref{eq:Nonl} yields
$$
-\Delta v+q_0 v=\lambda v.
$$
From the above, it follows that $v\geq 0$, $v\neq 0$. Thus $v$ is an eigenfunction corresponding to the principal eigenvalue of $\mathcal{L}_{q_0}$. However, by the assumption $\lambda>\lambda_1(q_0)$ and we get a contradiction.  

Thus the claim is proving and we may assume that $\hat{u}(\lambda, q_n) \rightharpoondown \bar{u}$ weakly in $W^{1,2}$, $\hat{u}(\lambda, q_n) \to \bar{u}$ a.e. in $\Omega$ and strongly in $L^q$, $2\leq q <2N/(N-2)$ for some $\bar{u} \in W^{1,2}\setminus 0$.  Since $\hat{u}(\lambda, q_n)>0$ in $\Omega$, we conclude that $\bar{u} \geq 0$ a.e. in $\Omega$. Passing to the limit in \eqref{eq:Nonl} we obtain
$$
-\Delta \bar{u}+q_0 \bar{u}=\lambda \bar{u}- \bar{u}^\frac{p+1}{p-1}.
$$
Due to the uniqueness of solution of \eqref{eq:Nonl}, it follows that $\bar{u}=\hat{u}(\lambda, q_0)$. Furthermore, this implies that $\hat{u}(\lambda, q_n) \to \hat{u}(\lambda, q_0)$ strongly in $W^{1,2}$. Thus we have proved that the map $\hat{u}(\lambda, \cdot):  L^p \to W^{1,2}_0$ is continuous. 

Under assumption \eqref{Pas}, we have a  continuous embedding $W^{1,2} \subset L^{2p/(p-1)}$. Hence   $\hat{u}^{2/(p-1)}(\lambda, q_n) \to \hat{u}^{2/(p-1)}(\lambda, q_0)$  strongly in $L^p$ and thus $\hat{q}(\lambda, q_n)=q_0+\hat{u}^{2/(p-1)}(\lambda, q_n)$ strongly  converges to $\hat{q}(\lambda, q_0)=q_0+\hat{u}^{2/(p-1)}(\lambda, q_0)$ in $L^p$ as $n\to +\infty$. This concludes the proof of \textbf{(i)}.

The proof for the first part of \textbf{(ii)} is similar to \textbf{(i)}. To prove that $\hat{q}(q_0, \lambda) \to q_0$ in  $L^p$ as $\lambda \downarrow \lambda_1(q_0)$, it is remained to show that 
for $\lambda=\lambda_1(q_0)$  problem \eqref{eq:Nonl} may has only zero solution. Suppose, contrary to our claim, that there exists a positive solution $u$ of \eqref{eq:Nonl} for $\lambda=\lambda_1(q_0)$. Then testing the equation in \eqref{eq:Nonl} by $\phi_1(q_0)$ and integrating by parts we obtain
\begin{align*}
	\int_{\Omega} u (-\Delta \phi_1(q_0))\, dx + \int_{\Omega} q_0 u\phi_1(q_0) \,dx = 
	\lambda_1(q_0) \int_{\Omega} u\phi_1(q_0) \,dx- \int_{\Omega} u^\frac{p+1}{p-1}\phi_1(q_0) \,dx,
\end{align*}
which implies that $\int_{\Omega} u^\frac{p+1}{p-1}\phi_1(q) \,dx=0$. However this is possible only if $u\equiv 0$ in $\Omega$.
\end{proof}

%
%

\section{Conclusion remarks and open problems}\label{sec:conclusion}

Notice that if $\lambda<\lambda_1(q_0)$, then nonlinear boundary value problem has no solution (see, e.g., \cite{brezis}). However, the existence of solution of $(P)$ in the case $\lambda<\lambda_1(q_0)$ is unknown.

Since for various $p$ equation \eqref{eq:Nonl} has different solutions,  the answer on the inverse optimization spectral problem $(P)$ essentially depends on the prescribed  norm $\|\cdot\|_{L^p}$. We are unable to offer criteria necessary to identify preferred norms. However, it should be noted that a similar problem about choosing a suitable norm has already been encountered in the literature on the theory of inverse problems (see, e.g., \cite{Mar, SavShk}). 


It is an open problem to solve the  $m$-parametric  inverse optimization spectral problem
\par\noindent
$(P^m)$:\,\,\textit{Given $\lambda_1,...,\lambda_m \in \mathbb{R}$ and $q_0 \in L^p(\Omega)$. Find   a potential  $\hat{q} \in L^p(\Omega)$ such that  $\lambda_i=\lambda_i(\hat{q})$, $i=1,...,m$ and }
\begin{equation*}
	\|q_0-\hat{q} \|_{L^p}=\inf\{||q_0-q||_{L^p}:~~ \lambda_i=\lambda_i(q), ~i=1,...,m,~q \in L^p(\Omega)\}.
\end{equation*}
The above inference of the nonlinear boundary value problem \eqref{eq:Nonl} can be formally generalized to be applicable to problem $(P^m)$. In that case, one can obtain the following system of nonlinear equations
%

\begin{equation}\label{sys}
	\begin{cases}
		-\Delta u_i+q_0 u_i=\lambda_i u_i- (\sum_{j=1}^m \mu_j u_j^{2})^\frac{p}{p-1}u_i,~~i=1,2,...,m,\\ 
		~~u_i|_{\partial \Omega}=0,~~~~i=1,2,...,m.
	\end{cases}
\end{equation}
where $\mu_i\geq 0$, $i=1,2,...,m$ are some constants so that 
\begin{equation}\label{OPEN}
	\hat{q}=q_0+(\sum_{j=1}^m \mu_j u_j^{2})^\frac{p}{p-1}.
\end{equation}
However, we do not know how to justify this approach. Moreover, as far as we know, the existence and uniqueness of solution for \eqref{sys} with $q_0 \in L^p$ is also an open problem. Nevertheless, it would be useful to verify \eqref{OPEN}  numerically.

\end{document}